\numberwithin{equation}{section}
\theoremstyle{plain}
\newtheorem{theorem}{Theorem}[section]
\newtheorem{lemma}{Lemma}[section]
\newtheorem{proposition}{Proposition}[section]
\theoremstyle{remark}
\def\citeapos#1{\citeauthor{#1}'s (\citeyear{#1})}
\newcommand{\GB}{{ \mathrm{S} }}
\newcommand{\JSJS}{{ \mathrm{JS} }}
\newcommand{\RD}{{ \mathrm{Rdiff}_0 }}
\newcommand{\EE}{\mathrm{E}}
\newcommand{\rd}{\mathrm{d}}
\NewDocumentEnvironment{manual}{O{theorem}m}
 {%
  \addtocounter{theorem}{-1}%
  \begin{#1}
 }
 {\end{#1}}
\begin{document}
\begin{frontmatter}
\title{A new perspective on dominating \\ the James--Stein estimator}
\runtitle{Dominating the James--Stein estimator}

\begin{aug}
\author{\fnms{Yuzo} \snm{Maruyama}\thanksref{addr1,t1}\ead[label=e1]{maruyama@math.s.chiba-u.ac.jp}}
\and
\author{\fnms{Akimichi} \snm{Takemura}\thanksref{addr2,t2}\ead[label=e2]{a-takemura@biwako.shiga-u.ac.jp}}

\runauthor{Y.~Maruyama and A.~Takemura}

\address[addr1]{Chiba University \printead{e1} 
}

\address[addr2]{Shiga University \printead{e2}
}

\thankstext{t1}{supported by JSPS KAKENHI Grant Number 22K11933}
\thankstext{t2}{supported by JSPS KAKENHI Grant Number 24K14852}
\end{aug}

\begin{abstract}
This paper presents a novel approach to constructing estimators that dominate the classical James--Stein estimator under the quadratic loss for multivariate normal means. 
Building on Stein's risk representation, 
we introduce a new sufficient condition involving a monotonicity property of a transformed shrinkage function. 
We derive a general class of shrinkage estimators that satisfy minimaxity and dominance over the James--Stein estimator, including cases with polynomial or logarithmic convergence to the optimal shrinkage factor. 
We also provide conditions for uniform dominance across dimensions and for improved asymptotic risk performance. 
We present several examples and numerical validations to illustrate the theoretical results.
\end{abstract}

\begin{keyword}[class=MSC]
\kwd[Primary ]{62C20}
\end{keyword}

\begin{keyword}
\kwd{minimaxity}
\kwd{James--Stein estimator}
\end{keyword}

\end{frontmatter}

\section{Introduction and a main result}
\label{sec:intro}
Let $ X$ have a $p$-variate normal distribution 
$ \mathcal{N}_{p} (\theta, I_{p}) $. 
We consider the problem of estimating the mean vector $\theta$ under 
the quadratic loss function $\| \hat{\theta} - \theta \|^2$.
Then the risk function of an estimator $ \hat{\theta}(X)$ is
$ R(\theta,\hat{\theta})=\EE_\theta\bigl[ \| \hat{\theta}(X) - \theta \|^2\bigr]$.
The usual unbiased estimator, $ X $, has constant risk $p$ and is
minimax for $p\in\mathbb{N}$. 
\cite{Stein-1956} showed that, for $p\geq 3$,
there exist estimators dominating the usual estimator $X$
among a class of estimators of the form,
\begin{equation}\label{theta.a.b}
\hat{\theta}_{a,b}=\left(1-\frac{b}{a+\|X\|^2}\right)X,
\end{equation}
for large $a\geq 0$ and small $b>0$.
\cite{James-Stein-1961}
found an explicit dominating procedure among the estimators in \eqref{theta.a.b},
with $a=0$ and $b=p-2$,
\begin{equation}\label{JS}
\hat{\theta}_{\JSJS}(X)=\left( 1- \frac{p-2}{\| X \|^2}\right)X,
\end{equation}  
called the James--Stein estimator. 
Further, as shown in \cite{Baranchik-1964}, the James--Stein estimator is inadmissible,
as the positive--part estimator
\begin{equation}\label{JSPP}
\hat{\theta}_{\JSJS}^+(X)=\max\left(0, 1- \frac{p-2}{\| X \|^2}\right)X
\end{equation}  
dominates $\hat{\theta}_{\JSJS}$.
For a class of general shrinkage estimators 
of the form 
\begin{equation}\label{hat.theta.phi}
 \hat{\theta}_{\phi}(X)= \left( 1- \frac{\phi(\| X \|^2)}{\| X \|^2}\right)X,
\end{equation}
\cite{Baranchik-1970} proposed a sufficient condition for minimaxity, 
\{\ref{ba.1} \& \ref{ba.2} \& \ref{ba.3}\} below,
\begin{enumerate}[label= \textbf{A.\arabic*}, leftmargin=*]
\item\label{ba.1} $\phi'(w)\geq 0$ for all $w\geq 0$,
\item\label{ba.2} $ \phi(w)\geq 0$ for all $w\geq 0$,
\item\label{ba.3} $ \phi(w)\leq 2(p-2)$ for all $w\geq 0$.
\end{enumerate}
Then, by \citeauthor{Baranchik-1970}'s condition, 
$\hat{\theta}_{a,b}$ given by \eqref{theta.a.b}
is minimax for 
$a\geq 0$ and $0\leq b\leq 2(p-2)$.
Further,
\cite{Stein-1974} expressed the risk of
$\hat{\theta}_{\phi}(X)$ as
\begin{equation}\label{stein.identity.1}
R(\theta,\hat{\theta}_\phi)
 =\EE_\theta\bigl[\hat{R}_\phi(\|X\|^2)\bigr],
\end{equation}
where 
\begin{equation}\label{stein.identity.2}
 \hat{R}_\phi(w)=p+\frac{\phi(w)}{w}\left\{\phi(w)-2(p-2)\right\}-4\phi'(w).
\end{equation}
Hence the shrinkage factor $\phi(w)$ in \eqref{hat.theta.phi} with the inequality $\hat{R}_\phi(w)\leq p$ for all $w\geq 0$,
implies minimaxity of $\hat{\theta}_\phi$. 
We see that \{\ref{ba.1} \&  \ref{ba.2} \& \ref{ba.3}\} is a tractable sufficient condition 
for $\hat{R}_\phi(w)\leq p$ for all $w\geq 0$.

As we mentioned in \eqref{JSPP}, $\hat{\theta}_{\JSJS}(X)$
is inadmissible, which have given rise to theoretical challenges:
the problem of finding estimators dominating $\hat{\theta}_{\JSJS}(X)$.
\cite{Kubokawa-1991} showed that
the generalized Bayes estimator with respect to \citeapos{Stein-1974} prior $\|\theta\|^{2-p} $
dominates $\hat{\theta}_{\JSJS}(X)$,
\begin{equation}
 \hat{\theta}_{\GB}(X)=\frac{\int \theta \exp(-\|X-\theta\|^2/2)\|\theta\|^{2-p}\rd \theta}
{\int \exp(-\|X-\theta\|^2/2)\|\theta\|^{2-p}\rd \theta}
=\left( 1- \frac{\phi_{\GB}(\| X \|^2)}{\| X \|^2}\right)X,
\end{equation}
where
\begin{equation}\label{phi.GB}
 \phi_{\GB}(w)=p-2-\frac{2}{\int_0^\infty (g+1)^{-p/2} \exp(wg/\{2(g+1)\})\rd g}.
\end{equation}
\cite{Kubokawa-1994} gave a sufficient condition for dominating
$\hat{\theta}_{\JSJS}(X)$, \{\ref{ba.1} \& \ref{ku.1} \& \ref{ku.2}\},
where:
\begin{enumerate}[label= \textbf{A.\arabic*}, leftmargin=*, resume]
\item\label{ku.1} $\lim_{w\to\infty} \phi(w)=p-2$, 
\item\label{ku.2} $\phi(w)\geq \phi_{\GB}(w)$ for all $w\geq 0$.
\end{enumerate}
As in \eqref{phi.GB}, $\phi_{\GB}(w)$ approaches $p-2$ 
at an exponential order as $w\to\infty$.
Hence, 
any $\phi(w)$ approaching $p-2$ at a polynomial order, including
\begin{equation}
  \phi_{a,p-2}(w)=\frac{(p-2)w}{a+w}=(p-2)\Bigl(1-\frac{a}{a+w}\Bigr),\quad \text{for}\quad 
a>0,
\end{equation}
does not satisfy \ref{ku.2} since $ \phi_{\GB}(w)>\phi_{a,p-2}(w)$ for large $w$.
Moreover, a general sufficient condition for dominating $\hat{\theta}_{\JSJS}(X)$
 by functions converging to $p-2$ at a polynomial rate 
has not been known until now.

In this paper, we take a new approach.
By \eqref{stein.identity.1} and \eqref{stein.identity.2},
the difference of the risks is 
\begin{equation}\label{risk.diff.1}
R(\theta,\hat{\theta}_{\JSJS})- R(\theta,\hat{\theta}_{\phi})
 =\EE_\theta\bigl[\hat{R}_{\JSJS}(\|X\|^2)-\hat{R}_\phi(\|X\|^2)\bigr]
 \end{equation}
where
\begin{equation}\label{Phi.1}
 \begin{split}
 \hat{R}_{\JSJS}(w)-\hat{R}_\phi(w)&=4\phi'(w)-\frac{\{p-2-\phi(w)\}^2}{w}\\
&=\frac{\{p-2-\phi(w)\}^2}{w}\left(4\frac{w\phi'(w)}{\{p-2-\phi(w)\}^2}-1\right).
\end{split}
\end{equation}
For our methodology, 
the monotonic non-decreasing property of the function $w\phi'(w)/\{p-2-\phi(w)\}^2$
which appears in \eqref{Phi.1}, and the dominance at the origin are central. 
Thus we assume 
\begin{enumerate}[label= \textbf{A.\arabic*}, leftmargin=*, resume]
\item\label{mt.1} $w\phi'(w)/\{p-2-\phi(w)\}^2$ is monotone non-decreasing,
\item\label{mt.2} $R(0,\hat{\theta}_{\JSJS})\geq R(0,\hat{\theta}_{\phi})$, 
i.e.~dominance at the origin,
\end{enumerate}
in addition to \{\ref{ba.1} \& \ref{ba.2} \& \ref{ku.1}\}.
We then obtain the main result of this paper.
\begin{theorem}\label{thm:main}
Assume \{\ref{ba.1} \& \ref{ba.2} \& \ref{ku.1} \& \ref{mt.1} \& \ref{mt.2}\}. Then
$\hat{\theta}_{\phi}$ 
dominates the James--Stein estimator 
$\hat{\theta}_{\JSJS}(X)$.
\end{theorem}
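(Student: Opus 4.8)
The plan is to show that the expected risk difference in \eqref{risk.diff.1} is nonnegative for every $\theta$. Writing $\lambda=\|\theta\|^2$ and recalling that $\|X\|^2$ follows a noncentral chi-square distribution with $p$ degrees of freedom and noncentrality parameter $\lambda$, with density $f_\lambda$, it suffices to prove that $g(\lambda):=\EE_\theta\bigl[h(\|X\|^2)\bigr]\ge 0$ for all $\lambda\ge 0$, where $h(w):=\hat{R}_{\JSJS}(w)-\hat{R}_\phi(w)$ is the integrand displayed in \eqref{Phi.1}. First I would record the sign structure of $h$. By \ref{ba.1} and \ref{ku.1} the factor $\phi$ is non-decreasing with limit $p-2$, so $\phi(w)\le p-2$ and, with \ref{ba.2}, $0\le\phi(w)\le p-2$; where $\phi(w)<p-2$ the denominator in $\psi(w):=w\phi'(w)/\{p-2-\phi(w)\}^2$ is positive and the prefactor $\{p-2-\phi(w)\}^2/w$ in \eqref{Phi.1} is nonnegative, so the sign of $h(w)$ coincides with that of $4\psi(w)-1$, while at any point with $\phi(w)=p-2$ one has $\phi'(w)=0$ and $h(w)=0$.

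The crucial consequence of \ref{mt.1} is that $\psi$, and hence $4\psi(w)-1$, is monotone non-decreasing; since $\psi\ge 0$ by \ref{ba.1}, the function $4\psi(w)-1$ crosses zero at most once, from negative to positive. Consequently there is a threshold $w_0\in[0,\infty]$ with $h(w)\le 0$ on $[0,w_0)$ and $h(w)\ge 0$ on $(w_0,\infty)$: the integrand exhibits a single sign change, from $-$ to $+$.

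Next I would invoke the monotone likelihood ratio of the noncentral chi-square family. Using the Poisson mixture representation of $f_\lambda$, the ratio $r(w):=f_{\lambda_2}(w)/f_{\lambda_1}(w)$ is non-decreasing in $w$ for any $\lambda_2>\lambda_1\ge 0$. Combined with the single sign change of $h$, this yields the variation-diminishing inequality $h(w)\{r(w)-r(w_0)\}\ge 0$ for all $w$, since on $[0,w_0)$ both $h(w)\le 0$ and $r(w)-r(w_0)\le 0$, while on $(w_0,\infty)$ both are nonnegative. Integrating against $f_{\lambda_1}$ gives $g(\lambda_2)=\int h(w)\{r(w)-r(w_0)\}f_{\lambda_1}(w)\,\rd w+r(w_0)\,g(\lambda_1)\ge r(w_0)\,g(\lambda_1)$, so that $g(\lambda_1)\ge 0$ forces $g(\lambda_2)\ge 0$. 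Thus $g$ itself changes sign at most once as $\lambda$ increases, and only from $-$ to $+$.

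Finally, the anchoring hypothesis \ref{mt.2} states precisely that $g(0)=R(0,\hat{\theta}_{\JSJS})-R(0,\hat{\theta}_\phi)\ge 0$. Since $g$ can change sign only from $-$ to $+$ and is already nonnegative at $\lambda=0$, it must be nonnegative for every $\lambda\ge0$, which is the asserted dominance; strictness follows whenever $h\not\equiv 0$, i.e.\ whenever $\hat{\theta}_\phi\ne\hat{\theta}_{\JSJS}$. I expect the main obstacle to be the rigorous justification of the variation-diminishing step—establishing the monotone likelihood ratio of $f_\lambda$ and handling the endpoint cases $w_0\in\{0,\infty\}$ together with the possible vanishing of the prefactor—rather than the sign analysis of $h$, which follows directly from \ref{mt.1}.
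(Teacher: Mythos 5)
Your proposal is correct, and it reaches the conclusion by a genuinely different route from the paper. The paper expands the noncentral chi-square law as a Poisson mixture of central chi-squares, writes each term as $\int\{4\Phi(w)-1\}h(w)g_{p+2k}(w)\,\rd w$ with $\Phi(w)=w\phi'(w)/\{p-2-\phi(w)\}^2$ and $h(w)=\{p-2-\phi(w)\}^2/w$, and applies the covariance inequality to the monotone factor $\Phi$ against the increasing likelihood ratio $w^{k}$; this bounds every $k$-th term below by a positive multiple of the $k=0$ term, which is exactly the risk difference at the origin, so \ref{mt.2} finishes the argument. You instead keep the full integrand $\hat{R}_{\JSJS}(w)-\hat{R}_\phi(w)$ intact, note that \ref{mt.1} (with \ref{ba.1}) forces it to have at most one sign change, from $-$ to $+$, and then invoke the monotone likelihood ratio of the noncentral chi-square family in $\lambda=\|\theta\|^2$ to conclude via a variation-diminishing inequality that $g(\lambda)=R(\theta,\hat{\theta}_{\JSJS})-R(\theta,\hat{\theta}_\phi)$ itself crosses zero at most once from below, so \ref{mt.2} again anchors it. The two arguments exploit the same total-positivity phenomenon, but yours buys a slightly stronger conclusion (single-crossing of the risk difference in $\lambda$, and immediate portability to any family with MLR), while the paper's is more self-contained: it needs only the elementary covariance inequality for central chi-square weights, whereas the MLR property of the noncentral chi-square that you cite is itself usually proved by precisely the Poisson-mixture/covariance argument the paper runs. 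The only points you should tighten are the ones you already flag: the degenerate case where $h\le 0$ everywhere (there $w_0=\infty$ and $r(w_0)$ is undefined, but then \ref{mt.2} forces $h=0$ a.e.\ and $g\equiv 0$), and the points where $\phi(w)=p-2$, where \eqref{Phi.1} gives $h(w)=4\phi'(w)\ge 0$ directly so the single-crossing structure survives.
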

In order to gain insight into a function $\phi(w)$ satisfying \ref{mt.1},
let us instead suppose that a positive monotone non-decreasing $\Phi(w)$ is given. 
Then our $\phi(w)$ can be formulated 
as the solution of a separable differential equation 
\begin{equation}\label{sde}
 \frac{\rd}{\rd w}\left(\frac{1}{p-2-\phi(w)}\right)=\frac{\Phi(w)}{w},
\end{equation}
with the solution
\begin{equation}\label{phi.Phi}
 \phi(w)=p-2-\frac{1}{\int_0^w t^{-1}\Phi(t)\rd t+ C},
\end{equation}
where $C$ represents the constant of integration.
When $C\geq 1/(p-2)$,
the function $\phi(w)$ given by \eqref{phi.Phi}
satisfies \{\ref{ba.1} \& \ref{ba.2} \& \ref{ku.1}\}.
Hence, as an alternative but equivalent expression,
Theorem \ref{thm:main} can be represented as follows.

\begin{manual}{\ref{thm:main}$^\prime$}
Suppose $\Phi(w)$ is positive and monotone non-decreasing. Then 
the estimator $\hat{\theta}_{\phi}=(1-\phi(\|X\|^2)/\|X\|^2)X$ where
\begin{equation}\label{phi.Phi.0}
\phi(w)=p-2-
 \frac{1}{\int_0^w t^{-1}\Phi(t)\rd t+ C}\quad\text{for}\quad C\geq \frac{1}{p-2}
\end{equation}
dominates the James--Stein estimator 
$\hat{\theta}_{\JSJS}(X)$ 
if \ref{mt.2} is satisfied.
\end{manual}

%

The organization of this paper is as follows.
We give the proof of Theorem \ref{thm:main}${}^\prime$ in Section \ref{sec:proof.1}.
In general, Assumption \ref{mt.2} must be checked individually for each dimension of the parameter.
To minimize the number of times Assumption \ref{mt.2} needs to be verified, 
we provide some sufficient conditions for the uniformity of Assumption \ref{mt.2} in Section \ref{sec:unif}.
Then, in Section \ref{sec:examples}, we demonstrate
estimators which satisfy the sufficient condition of 
Theorem \ref{thm:main} and the uniformity, including
\begin{equation}
\hat{\theta}_{1,p-2}=\left(1-\frac{p-2}{\|X\|^2+1}\right)X,
\end{equation}
which is the member of \citeauthor{Stein-1956}'s initial class \eqref{theta.a.b}.
Notably, the corresponding $\phi(w)$, $p-2-(p-2)/(w+1)$,  
approaches $p-2$ at a polynomial order.
We also propose 
\begin{equation}
\left(1-\frac{1}{\|X\|^2}\left\{p-2-\frac{2}{\log (\|X\|^2+1)+2/(p-2)}\right\} \right)
X
\end{equation}
which dominates the James--Stein estimator with $\phi$ 
approaching $p-2$ at a logarithmic order.
In Section \ref{rem:zenkin}, 
we investigate the extent of asymptotic improvement achieved 
by shrinkage estimators for $\phi$ that converge to $p-2$ at polynomial or logarithmic rates, 
as $\|\theta\| \to \infty$.
%
Section \ref{sec:CR} gives some concluding remarks.

%
\section{Proof of Theorem \ref{thm:main}${}^\prime$}
\label{sec:proof.1}
We suppose a positive monotone non-decreasing function $\Phi(w)$ is first given
and the induced $\phi(w)$ is 
\begin{equation}
 \phi(w)=p-2-\frac{1}{\int_0^w t^{-1}\Phi(t)\rd t+ C},
\end{equation}
with $C\geq 1/(p-2)$, as in \eqref{phi.Phi.0}. 

Let $P_\lambda(k)$ and $g_{p+2k}(w)$ be the probability mass function of 
Poisson distribution with mean parameter $\lambda/2=\|\theta\|^2/2$ and
the probability density function of Chi-squared distribution 
with $p+2k$ degrees of freedom, respectively.
Then, by \eqref{risk.diff.1} and \eqref{Phi.1},
we have
\begin{equation}\label{E-RJS-R-G}
R(\theta,\hat{\theta}_{\JSJS})- R(\theta,\hat{\theta}_{\phi})
=\sum_{k=0}^\infty 
P_\lambda(k)\int_0^\infty \left\{4\Phi(w)-1\right\}h(w) g_{p+2k}(w)\rd w,
\end{equation}
where
\begin{equation}\label{varphi.1}
 h(w)=\frac{\{p-2-\phi(w)\}^2}{w}=
\frac{1}{w\{\int_0^w t^{-1}\Phi(t)\rd t+ C\}^2}.
\end{equation}
In each term of \eqref{E-RJS-R-G}, we have
\begin{equation}
\begin{split}
 \int_0^\infty \left\{4\Phi(w)-1\right\}h(w) g_{p+2k}(w)\rd w
&=\int_0^\infty h(w) g_{p+2k}(w)\rd w
\left(4\frac{\int_0^\infty \Phi(w) h(w) g_{p+2k}(w)\rd w}
{\int_0^\infty h(w) g_{p+2k}(w)\rd w}-1\right)\label{E-RJS-R-G.1}\\
&=\int_0^\infty h(w) g_{p+2k}(w)\rd w
\left(4\frac{\int_0^\infty \Phi(w) h(w) w^{p/2+k-1}e^{-w/2}\rd w}
{\int_0^\infty h(w) w^{p/2+k-1}e^{-w/2}\rd w}-1\right).
\end{split} 
\end{equation}
Since $\Phi(w)$ is monotone non-decreasing, by the covariance inequality,
we have
\begin{equation}\label{cov.ineq}
\frac{\int_0^\infty \Phi(w) h(w) w^{p/2+k-1}e^{-w/2}\rd w}
{\int_0^\infty h(w) w^{p/2+k-1}e^{-w/2}\rd w}
\geq 
\frac{\int_0^\infty \Phi(w) h(w) w^{p/2-1}e^{-w/2}\rd w}
{\int_0^\infty h(w) w^{p/2-1}e^{-w/2}\rd w},
\end{equation}
for $k=0,1,\dots$. It follows from \eqref{E-RJS-R-G.1} and \eqref{cov.ineq} that
\begin{equation}\label{E-RJS-R-G.2}
 \int_0^\infty \left\{4\Phi(w)-1\right\}h(w) g_{p+2k}(w)\rd w
\geq   
\int_0^\infty h(w) g_{p+2k}(w)\rd w
\left(4\frac{\int_0^\infty \Phi(w) h(w) w^{p/2-1}e^{-w/2}\rd w}
{\int_0^\infty h(w) w^{p/2-1}e^{-w/2}\rd w}-1\right).
\end{equation}
Note, by \eqref{E-RJS-R-G},
\begin{equation}\label{genten}
\begin{split}
 R(0,\hat{\theta}_{\JSJS})- R(0,\hat{\theta}_{\phi})
&=\int_0^\infty \left\{4\Phi(w)-1\right\}h(w) g_p(w)\rd w\\
&=\int_0^\infty h(w) g_p(w)\rd w
\left(4\frac{\int_0^\infty \Phi(w) h(w) w^{p/2-1}e^{-w/2}\rd w}
{\int_0^\infty h(w) w^{p/2-1}e^{-w/2}\rd w}-1\right).
\end{split}
\end{equation}
Hence, by \eqref{E-RJS-R-G}, \eqref{E-RJS-R-G.2} and \eqref{genten}, we have
\begin{equation}
R(\theta,\hat{\theta}_{\JSJS})- R(\theta,\hat{\theta}_{\phi})
\geq \left\{\sum_{k=0}^\infty 
P_\lambda(k)
\frac{\int_0^\infty h(w) g_{p+2k}(w)\rd w}{\int_0^\infty h(w) g_{p}(w)\rd w}\right\}
\left\{R(0,\hat{\theta}_{\JSJS})- R(0,\hat{\theta}_{\phi})\right\}
\end{equation}
which is non-negative under Assumption \ref{mt.2}, completing the proof.

\section{Uniformity in dimensionality}
\label{sec:unif}
In general, Assumption \ref{mt.2}, 
$R(0,\hat{\theta}_{\JSJS})\geq R(0,\hat{\theta}_{\phi})$,
must be checked individually for each dimension of the parameter. 
To minimize the number of times Assumption \ref{mt.2} needs to be verified, 
in this section,
we provide the sufficient conditions for the uniformity of Assumption \ref{mt.2}, 
with respect to the dimensionality $p$.

Let
\begin{equation}\label{IPhi}
 I(\Phi,C;p)=\frac{\int_0^\infty \Phi(w) h(w) w^{p/2-1}e^{-w/2}\rd w}
{\int_0^\infty h(w) w^{p/2-1}e^{-w/2}\rd w}
=\frac{\displaystyle \int_0^\infty \Phi(w) \frac{w^{p/2-1}e^{-w/2}}{w\{\int_0^w t^{-1}\Phi(t)\rd t+ C\}^2} \rd w}
{\displaystyle \int_0^\infty \frac{w^{p/2-1}e^{-w/2}}{w\{\int_0^w t^{-1}\Phi(t)\rd t+ C\}^2} \rd w},
\end{equation}
which appears in \eqref{E-RJS-R-G.2}.
We also clearly state the dimensionality of the parameter $\theta\in\mathbb{R}^p$ for
the difference in risk at $\theta=0$,
\begin{equation}\label{eq:RD}
 \RD(\hat{\theta}_{\JSJS},\hat{\theta}_{\phi};p)=R(0,\hat{\theta}_{\JSJS})- R(0,\hat{\theta}_{\phi}).
\end{equation}
Then, by \eqref{genten}, \eqref{IPhi} and \eqref{eq:RD}, the difference in risk at $\theta=0$ is represented as
\begin{equation}\label{RDRD.00}
 \RD(\hat{\theta}_{\JSJS},\hat{\theta}_{\phi};p)
=\{4I(\Phi,C;p)-1\}\int_0^\infty h(w) g_p(w)\rd w.
\end{equation}
Suppose both $ \Phi(w) $ and $C$ do not depend on $p$. 
Then, by the covariance inequality, we have
\begin{equation}
 I(\Phi,C;p+1)\geq I(\Phi,C;p)
\end{equation}
for any $p+1>p \geq 3$ and 
\begin{equation}\label{hikaku.0}
\begin{split}
  \RD(\hat{\theta}_{\JSJS},\hat{\theta}_{\phi};p+1)
&=\{4I(\Phi,C;p+1)-1\}\int_0^\infty h(w) g_{p+1}(w)\rd w\\
&\geq \{4I(\Phi,C;p)-1\}\int_0^\infty h(w) g_{p+1}(w)\rd w\\
&=\frac{\int_0^\infty h(w) g_{p+1}(w)\rd w}
{\int_0^\infty h(w) g_p(w)\rd w}\RD(\hat{\theta}_{\JSJS},\hat{\theta}_{\phi};p).
\end{split} 
\end{equation}
Then we have the following result.
\begin{proposition}\label{prop:1}
Suppose that both $ \Phi(w) $ and $C$ do not depend on $p$ and that
\begin{equation}
 \RD(\hat{\theta}_{\JSJS},\hat{\theta}_{\phi};p_*)\geq 0 \ \text{for some} \ p_*\geq 3.
\end{equation}
Then 
\begin{equation}
 \RD(\hat{\theta}_{\JSJS},\hat{\theta}_{\phi};p)\geq 0 \ \text{for all} \ p>p_*.
\end{equation}
\end{proposition}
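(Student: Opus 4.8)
The plan is a one-step induction on the dimension $p$, using the recursion already assembled in \eqref{hikaku.0} to propagate the non-negativity of the risk difference at the origin upward from the base dimension $p_*$.

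The substantive ingredient is the monotonicity $I(\Phi,C;p+1)\geq I(\Phi,C;p)$, which the covariance inequality supplies: writing $\mu(\rd w)=h(w)w^{p/2-1}e^{-w/2}\rd w$, one has $I(\Phi,C;p)=\int\Phi\,\rd\mu/\int\rd\mu$ and $I(\Phi,C;p+1)=\int\Phi\,w^{1/2}\rd\mu/\int w^{1/2}\rd\mu$, and since $\Phi(w)$ and $w^{1/2}$ are both non-decreasing they are non-negatively correlated under the probability measure proportional to $\mu$. Combined with the representation \eqref{RDRD.00} and the fact that $\int_0^\infty h(w)g_{p+1}(w)\rd w>0$ (because $h>0$), lowering the coefficient $4I(\Phi,C;p+1)-1$ to $4I(\Phi,C;p)-1$ preserves the inequality and factors through to give
\begin{equation}
 \RD(\hat{\theta}_{\JSJS},\hat{\theta}_{\phi};p+1)
 \geq
 \frac{\int_0^\infty h(w)g_{p+1}(w)\rd w}{\int_0^\infty h(w)g_{p}(w)\rd w}\,
 \RD(\hat{\theta}_{\JSJS},\hat{\theta}_{\phi};p),
\end{equation}
valid for every integer $p\geq 3$.

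The decisive point is that the multiplicative factor on the right is a ratio of two strictly positive integrals, hence strictly positive. Consequently the recursion transmits the sign of $\RD$ upward: whenever $\RD(\hat{\theta}_{\JSJS},\hat{\theta}_{\phi};p)\geq 0$ it forces $\RD(\hat{\theta}_{\JSJS},\hat{\theta}_{\phi};p+1)\geq 0$. Starting from the hypothesis $\RD(\hat{\theta}_{\JSJS},\hat{\theta}_{\phi};p_*)\geq 0$ and iterating over the integers $p_*,p_*+1,p_*+2,\dots$, induction yields $\RD(\hat{\theta}_{\JSJS},\hat{\theta}_{\phi};p)\geq 0$ for every integer $p>p_*$, which is the assertion. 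I do not anticipate a genuine obstacle here: the only thing to verify is the strict positivity of the multiplicative factor, since all the analytic content is already contained in the covariance inequality recorded before the statement and in the positivity of $h$.
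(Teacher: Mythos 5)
Your argument is correct and coincides with the paper's: the paper likewise derives the recursion \eqref{hikaku.0} from the covariance-inequality monotonicity $I(\Phi,C;p+1)\geq I(\Phi,C;p)$ together with \eqref{RDRD.00}, and then the proposition follows by iterating over integer dimensions exactly as you describe. Your explicit remarks on the strict positivity of the multiplicative factor and the correlation of $\Phi(w)$ with $w^{1/2}$ only make explicit what the paper leaves implicit.
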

Proposition \ref{prop:1}, 
along with the other two Propositions presented in this section, 
will be used in the following section to demonstrate that the estimators under consideration 
provide a uniform improvement over the James--Stein estimator 
with respect to the parameter dimension $p$.
       
Recall $\phi(0)=p-2-1/C$ and hence $\phi(w)\in(p-2-1/C,p-2)$ for all $w\geq 0$.
The function $\phi(w)$ with large $p$ and with $C$ independent of $p$, 
tends to take values only around $p-2$.
Based on this consideration, it is natural to set $C=1/(p-2)$,
which ensures that $\phi(0)=0$. Under $C=1/(p-2)$, we have the following lemma.
\begin{lemma}\label{lem:C.p-2}
Let $C=1/(p-2)$. Suppose that $ \Phi(w) $ does not depend on $p$ and that
there exists a positive $\beta$ such that
\begin{equation}\label{eq:beta.0}
 \frac{\Phi(w)}{\int_0^w t^{-1}\Phi(t)\rd t }\leq \beta \text{ for all }w\geq 0.
\end{equation}
Then, for $p\geq \beta+2$,
\begin{equation}
  I\Bigl(\Phi,\frac{1}{p-1};p+1\Bigr)\geq   I\Bigl(\Phi,\frac{1}{p-2};p\Bigr).
\end{equation}
\end{lemma}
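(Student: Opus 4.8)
The plan is to read both sides of the claimed inequality as weighted averages of the non-decreasing function $\Phi$ against two positive weights on $(0,\infty)$, and then to reduce the comparison to a monotone-likelihood-ratio statement that feeds into the same covariance inequality used in Section~\ref{sec:proof.1}. Writing $G(w)=\int_0^w t^{-1}\Phi(t)\,\rd t$ and abbreviating $a=1/(p-2)$, $b=1/(p-1)$ (so that $a>b>0$), the definition \eqref{IPhi} gives
\begin{equation}
I\Bigl(\Phi,\tfrac{1}{p-2};p\Bigr)=\frac{\int_0^\infty \Phi(w)\,m_p(w)\,\rd w}{\int_0^\infty m_p(w)\,\rd w},\qquad m_p(w)=\frac{w^{p/2-2}e^{-w/2}}{(G(w)+a)^2},
\end{equation}
and likewise $I(\Phi,\tfrac{1}{p-1};p+1)$ is the $\Phi$-average against $n(w)=w^{p/2-3/2}e^{-w/2}/(G(w)+b)^2$. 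Thus the assertion is precisely that replacing the weight $m_p$ by $n$ raises the average of the non-decreasing $\Phi$.

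First I would form the likelihood ratio of the two weights,
\begin{equation}
L(w)=\frac{n(w)}{m_p(w)}=w^{1/2}\left(\frac{G(w)+a}{G(w)+b}\right)^2 ,
\end{equation}
so that $n=L\,m_p$. If $L$ is monotone non-decreasing, then, since $\Phi$ is non-decreasing, the covariance inequality applied as in \eqref{cov.ineq} (now with base measure $m_p(w)\,\rd w$ and multiplier $L$) yields $\int \Phi L\,m_p/\int L\,m_p\ge \int\Phi\,m_p/\int m_p$, which is exactly $I(\Phi,\tfrac{1}{p-1};p+1)\ge I(\Phi,\tfrac{1}{p-2};p)$. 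Observe that $L$ is the product of the increasing factor $w^{1/2}$, arising from the increase $p\to p+1$ in the degrees of freedom, and the \emph{decreasing} factor $((G(w)+a)/(G(w)+b))^2$, arising from the decrease $a\to b$ of the integration constant; the content of the lemma is that for $p$ large enough the former prevails.

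The hard part will be establishing that $L$ is non-decreasing, and this is where both hypothesis \eqref{eq:beta.0} and the bound $p\ge\beta+2$ enter. Using $G'(w)=\Phi(w)/w$, logarithmic differentiation gives
\begin{equation}
\frac{L'(w)}{L(w)}=\frac{1}{2w}\left(1-\frac{4\Phi(w)(a-b)}{(G(w)+a)(G(w)+b)}\right),
\end{equation}
so $L'\ge 0$ is equivalent to $(G+a)(G+b)\ge 4\Phi\,(a-b)$. Here I would invoke \eqref{eq:beta.0} in the form $\Phi(w)\le\beta\,G(w)$, which (as $a-b>0$) reduces the claim to the quadratic inequality
\begin{equation}
G^2+\bigl[(a+b)-4\beta(a-b)\bigr]G+ab\ge 0\quad\text{for all }G\ge 0 .
\end{equation}
Since the leading coefficient and the constant term $ab$ are positive, this holds for every $G\ge 0$ provided $(a+b)-4\beta(a-b)\ge -2\sqrt{ab}$ (either the linear coefficient is non-negative, or it is negative but the discriminant $[(a+b)-4\beta(a-b)]^2-4ab$ is non-positive). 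This is $(\sqrt a+\sqrt b)^2\ge 4\beta(a-b)$, and substituting $a=1/(p-2)$, $b=1/(p-1)$ and simplifying $(\sqrt a+\sqrt b)/(\sqrt a-\sqrt b)=(\sqrt{p-1}+\sqrt{p-2})^2$ turns it into $(\sqrt{p-1}+\sqrt{p-2})^2\ge 4\beta$. Finally, since $(\sqrt{p-1}+\sqrt{p-2})^2=2p-3+2\sqrt{(p-1)(p-2)}\ge 4p-7$, the hypothesis $p\ge\beta+2$ gives $4p-7\ge 4\beta+1>4\beta$, so the quadratic inequality, and hence the monotonicity of $L$, holds, completing the argument.
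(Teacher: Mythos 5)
Your proof is correct and follows essentially the same route as the paper's: the likelihood ratio $L(w)$ is exactly the paper's $f(w;\Phi,p)$, and the covariance-inequality reduction and the logarithmic differentiation are identical. The only difference is the final algebraic step, where the paper bounds $\Phi\le\beta G\le(p-2)G$ and exhibits a perfect square $\{(p-2)y-1\}^2$, while you keep $\beta$ and argue via the discriminant of a quadratic in $G$ — a marginally sharper but equivalent computation.
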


\begin{proof}
By \eqref{IPhi}, we have
\begin{equation}
 I\Bigl(\Phi,\frac{1}{p-1};p+1\Bigr)
=\frac{\displaystyle\int_0^\infty \Phi(w)\frac{f(w;\Phi,p)w^{p/2-1}e^{-w/2}}
{w\{\int_0^w t^{-1}\Phi(t)\rd t+ 1/(p-2)\}^2} \rd w}
{\displaystyle\int_0^\infty \frac{f(w;\Phi,p)w^{p/2-1}e^{-w/2}}
{w\{\int_0^w t^{-1}\Phi(t)\rd t+ 1/(p-2)\}^2} \rd w},
\end{equation}
where
\begin{equation}\label{eq:f}
 f(w;\Phi,p)=w^{1/2}
\left(\frac{\int_0^w t^{-1} \Phi(t) \rd t+1/(p-2)}
{\int_0^w t^{-1} \Phi(t) \rd t+1/(p-1)}\right)^2.
\end{equation}
Recall $ \Phi(w)$ is monotone non-decreasing in $w$. 
If $ f(w;\Phi,p)$ is monotone increasing in $w$, the covariance inequality 
implies that
\begin{equation}
  I\Bigl(\Phi,\frac{1}{p-1};p+1\Bigr)
\geq 
\frac{\displaystyle\int_0^\infty \frac{\Phi(w)w^{p/2-1}e^{-w/2}\rd w}
{w\{\int_0^w t^{-1}\Phi(t)\rd t+ 1/(p-2)\}^2} }
{\displaystyle\int_0^\infty \frac{w^{p/2-1}e^{-w/2}\rd w}
{w\{\int_0^w t^{-1}\Phi(t)\rd t+ 1/(p-2)\}^2} }
=I\Bigl(\Phi,\frac{1}{p-2};p\Bigr),
\end{equation}
and the result follows. Hence it suffices to show 
the function $f(w;\Phi,p)$ given by \eqref{eq:f}
is increasing in $w$ for $p\geq \beta+2$.
Note 
\begin{equation}
 \log f(w)=\frac{1}{2}\log w+ 2\log
\frac{\int_0^w t^{-1} \Phi(t) \rd t+1/(p-2)}{\int_0^w t^{-1} \Phi(t) \rd t+1/(p-1)}.
\end{equation} 
Then we have
\begin{equation}
\begin{split}
 \frac{\rd}{\rd w}\log f(w)
&=\frac{1}{2}\frac{1}{w}+ 2
\frac{\Phi(w)/w}{\int_0^w t^{-1} \Phi(t) \rd t+1/(p-2)}-
2\frac{\Phi(w)/w}{\int_0^w t^{-1} \Phi(t) \rd t+1/(p-1)}\\
&=\frac{1}{2w}
\left(1- \frac{4\Phi(w)}{\{(p-2)\int_0^w t^{-1} \Phi(t) \rd t+1\}
\{(p-1)\int_0^w t^{-1} \Phi(t) \rd t+1\}}\right)\\
&\geq \frac{1}{2w}
\left(1- \frac{4(p-2)\int_0^w t^{-1} \Phi(t) \rd t}{\{(p-2)\int_0^w t^{-1} \Phi(t) \rd t+1\}
\{(p-1)\int_0^w t^{-1} \Phi(t) \rd t+1\}}\right),
\end{split} 
\end{equation}
where the inequality is due to \eqref{eq:beta.0} and $\beta\leq p-2$.
Let $ y=\int_0^w t^{-1} \Phi(t) \rd t$. Then
\begin{equation}
 1- 
\frac{4(p-2)y}{\{(p-2)y+1\}\{(p-1)y+1\}}
 \geq 1- \frac{4(p-2)y}{\{(p-2)y+1\}^2}
=\frac{\{(p-2)y-1\}^2}{\{(p-2)y+1\}^2}
\geq 0,
\end{equation}
which completes the proof.
\end{proof}
As in \eqref{hikaku.0}, with Lemma \ref{lem:C.p-2}, we have
\begin{equation}\label{hikaku}
\begin{split}
  \RD(\hat{\theta}_{\JSJS},\hat{\theta}_{\phi};p+1)
&\geq \frac{\int_0^\infty h(w) g_{p+1}(w)\rd w}
{\int_0^\infty h(w) g_p(w)\rd w}\RD(\hat{\theta}_{\JSJS},\hat{\theta}_{\phi};p),
\end{split} 
\end{equation}
for $ p\geq \beta+2$.
Then 
we have the following result.
\begin{proposition}\label{prop:2}
Let $C=1/(p-2)$. Suppose that $ \Phi(w) $ does not depend on $p$ and that
there exists a positive $\beta$ such that
\begin{equation}\label{eq:beta.00}
 \frac{\Phi(w)}{\int_0^w t^{-1}\Phi(t)\rd t }\leq \beta \text{ for all }w\geq 0.
\end{equation}
Further suppose that $\RD(\hat{\theta}_{\JSJS},\hat{\theta}_{\phi};p_*)\geq 0  \text{ for some } p_*\geq \beta+2$.
Then 
\begin{equation}
 \RD(\hat{\theta}_{\JSJS},\hat{\theta}_{\phi};p)\geq 0  \text{ for all }  p>p_*.
\end{equation}
\end{proposition}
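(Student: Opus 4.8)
The plan is to read off the claim from the recursive bound \eqref{hikaku} by a one-line induction on the dimension $p$; essentially all of the analytic content has already been discharged in Lemma \ref{lem:C.p-2}, which underlies \eqref{hikaku}.

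First I would note that the multiplicative factor $\int_0^\infty h(w) g_{p+1}(w)\rd w\,/\,\int_0^\infty h(w) g_p(w)\rd w$ appearing in \eqref{hikaku} is strictly positive, because $h(w)>0$ for $w>0$ by \eqref{varphi.1} and both chi-squared densities are positive. Thus \eqref{hikaku}, valid for every $p\geq \beta+2$, shows that incrementing the dimension by one multiplies $\RD(\hat{\theta}_{\JSJS},\hat{\theta}_{\phi};p)$ by a positive number and in particular preserves non-negativity.

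Next I would set up the induction with base case the hypothesized $p_*\geq \beta+2$ satisfying $\RD(\hat{\theta}_{\JSJS},\hat{\theta}_{\phi};p_*)\geq 0$. For the inductive step, assuming $\RD(\hat{\theta}_{\JSJS},\hat{\theta}_{\phi};p)\geq 0$ for some integer $p\geq p_*$, the bound \eqref{hikaku} applies (since $p\geq p_*\geq \beta+2$) and gives $\RD(\hat{\theta}_{\JSJS},\hat{\theta}_{\phi};p+1)\geq 0$. Iterating from $p=p_*$ upward delivers $\RD(\hat{\theta}_{\JSJS},\hat{\theta}_{\phi};p)\geq 0$ for all integers $p>p_*$.

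I do not expect any genuine obstacle here: the difficulty lives entirely in Lemma \ref{lem:C.p-2}. The one point I would be careful about is that the induction must remain in the regime $p\geq \beta+2$ where \eqref{hikaku} is licensed, but this is automatic since we start at $p_*\geq \beta+2$ and the recursion only increases $p$. This mirrors the induction behind Proposition \ref{prop:1}, the only difference being that here the one-step comparison of the $I(\Phi,C;p)$ terms requires the choice $C=1/(p-2)$ and hypothesis \eqref{eq:beta.00} through Lemma \ref{lem:C.p-2}, rather than the elementary covariance-inequality argument available when $C$ is fixed.
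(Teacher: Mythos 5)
Your proposal is correct and follows exactly the paper's own route: the paper also derives the claim by iterating the recursive bound \eqref{hikaku}, whose validity for $p\geq\beta+2$ is exactly the content of Lemma \ref{lem:C.p-2}, and the positivity of the ratio of integrals makes the induction from $p_*$ immediate. (The only cosmetic caveat, shared with the paper's notation, is that $h$ in the numerator of \eqref{hikaku} implicitly uses $C=1/(p-1)$ while the denominator uses $C=1/(p-2)$; this does not affect the argument since only positivity of the ratio is used.)
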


\medskip

Finally suppose that $ \Phi(w)$ also depends on $p$ (say $\Phi_p(w)$) and 
that $C=1/(p-2)$ as in Proposition \ref{prop:2}. 
Further suppose 
that $I(\Phi_p,1/(p-2);p)$ is bounded below by $\tilde{I}(p)$.
Then we have
\begin{equation}
\begin{split}
 \RD(\hat{\theta}_{\JSJS},\hat{\theta}_{\phi};p)
&= \{4I(\Phi_p,1/(p-2);p)-1\}\int_0^\infty h(w) g_p(w)\rd w\\
&\geq \{4\tilde{I}(p)-1\}\int_0^\infty h(w) g_p(w)\rd w.
\end{split} 
\end{equation}
When $\tilde{I}(p)$ is increasing in $p$, $ 4\tilde{I}(p_*)-1\geq 0$ 
implies $4\tilde{I}(p)-1\geq 0$ for $p> p_*$.
Then the following result holds.
\begin{proposition}\label{pro.p.p.p}
 Suppose $ \Phi(w)$ depend on $p$ and 
$I(\Phi_p,1/(p-2);p)> \tilde{I}(p)$ where $\tilde{I}(p)$ is increasing in $p$.
Further suppose $4\tilde{I}(p_*)-1\geq 0$ for some $ p_*\geq 3$.
Then 
\begin{equation}
 \RD(\hat{\theta}_{\JSJS},\hat{\theta}_{\phi};p)\geq 0  \text{ for all }  p\geq p_*.
\end{equation}
\end{proposition}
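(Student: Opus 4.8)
The plan is to read the conclusion directly off the risk-difference representation \eqref{RDRD.00}, which already isolates the sign of $\RD$ in a single scalar factor. Specializing that identity to the present setting, where $\Phi$ depends on $p$ and $C=1/(p-2)$, gives
\begin{equation}
\RD(\hat{\theta}_{\JSJS},\hat{\theta}_{\phi};p)
=\{4I(\Phi_p,1/(p-2);p)-1\}\int_0^\infty h(w) g_p(w)\rd w.
\end{equation}
The integral factor is non-negative: from \eqref{varphi.1} we have $h(w)\geq 0$ for all $w>0$, and $g_p$ is a probability density, so $\int_0^\infty h(w)g_p(w)\rd w\geq 0$. Hence the sign of $\RD$ is controlled entirely by the bracket $4I(\Phi_p,1/(p-2);p)-1$, and it suffices to show this bracket is non-negative for every $p\geq p_*$.

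First I would replace $I$ by its assumed lower bound. Since $I(\Phi_p,1/(p-2);p)>\tilde{I}(p)$, we have
\begin{equation}
4I(\Phi_p,1/(p-2);p)-1> 4\tilde{I}(p)-1,
\end{equation}
so it is enough to verify $4\tilde{I}(p)-1\geq 0$ for all $p\geq p_*$. This reduces to a one-dimensional monotonicity argument: because $\tilde{I}$ is increasing in $p$ and $4\tilde{I}(p_*)-1\geq 0$ holds at the base point, for every $p\geq p_*$ we obtain $4\tilde{I}(p)-1\geq 4\tilde{I}(p_*)-1\geq 0$. Combining this with the previous strict inequality yields $4I(\Phi_p,1/(p-2);p)-1\geq 0$, and multiplying by the non-negative integral factor gives $\RD(\hat{\theta}_{\JSJS},\hat{\theta}_{\phi};p)\geq 0$ for all $p\geq p_*$, as required.

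There is no genuine analytic obstacle inside the proposition itself: it is an immediate assembly of the representation \eqref{RDRD.00}, a pointwise lower bound, and a scalar monotonicity step. The real difficulty is pushed into the hypotheses, and will surface only in applications, where one must exhibit an explicit increasing lower bound $\tilde{I}(p)$ for $I(\Phi_p,1/(p-2);p)$ and then locate a base dimension $p_*$ at which $4\tilde{I}(p_*)\geq 1$. I expect that constructing and estimating such a $\tilde{I}(p)$ — typically via a covariance-inequality bound on the weighted average defining $I$, in the spirit of \eqref{cov.ineq} and Lemma \ref{lem:C.p-2} — will be the step requiring care, whereas the deduction recorded in the proposition is essentially automatic.
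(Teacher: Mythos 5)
Your proof is correct and follows essentially the same route as the paper's own argument: the paper likewise starts from the representation \eqref{RDRD.00}, lower-bounds $4I(\Phi_p,1/(p-2);p)-1$ by $4\tilde{I}(p)-1$, and uses the monotonicity of $\tilde{I}$ together with the base case at $p_*$ to conclude. Your only addition is the explicit remark that $\int_0^\infty h(w)g_p(w)\rd w\geq 0$, which the paper leaves implicit.
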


\section{Examples}
\label{sec:examples}
This section provides specific examples of estimators 
that fulfill the uniformity of Assumption \ref{mt.2},
discussed in Section \ref{sec:unif}.

As a function $\Phi(w)$ independent of $p$, let us consider
\begin{equation}\label{Phi.11}
 \Phi_1(w;b)=\frac{1}{b}\frac{w}{w+1},
\end{equation}
for $0<b<4$ (See \eqref{kyoku.1} in Section \ref{rem:zenkin}). 
Then the induced $\phi$ is
\begin{equation}
 \phi_1(w)=p-2- \frac{1}{\log(w+1)/b+C}.
\end{equation}
We consider two cases $C=1$ and $C=1/(p-2)$,
so that we can apply Propositions \ref{prop:1} and \ref{prop:2} respectively.
When $C=1/(p-2)$, $\beta+2$ in Proposition \ref{prop:2} is equal to $3$ since
\begin{equation}
 \frac{\Phi_1(w)}{\int_0^w t^{-1}\Phi_1(t)\rd t }
=\frac{w}{(w+1)\log(w+1)}\leq \frac{1}{w+1}\leq 1,\quad\forall \ w\geq 0.
\end{equation}
Table \ref{tab:example.0} shows which pairs of the dimension $p$ and parameters $(b,C)$
lead the shrinkage estimator with $\phi_1(w)$ to improve upon the James--Stein estimator. 
A star ($\star$) indicates pairs for which Assumption \ref{mt.2} is individually fulfilled, 
while a bullet ($\bullet$) denotes pairs for which 
Assumption \ref{mt.2} is uniformly fulfilled, 
by Propositions \ref{prop:1} and \ref{prop:2}.
A minus ($-$) indicates that Assumption \ref{mt.2} is not satisfied for the corresponding pair.

Next we consider 
\begin{equation}\label{Phi.22}
 \Phi_2(w;b,\gamma)=\left(\frac{w}{b}\right)^\gamma\quad\text{for }b>0\text{ and }\gamma>0,
\end{equation}
which does not depend on $p$.
Then the induced $\phi$ is
\begin{equation}
 \phi_2(w)=p-2- \frac{1}{w^\gamma/(b^\gamma\gamma)+C}.
\end{equation}
We treat two cases $C=1$ and $C=1/(p-2)$,
where we can apply Propositions \ref{prop:1} and \ref{prop:2}, respectively.
For $C=1$, Table \ref{tab:example.1} shows which pairs of the dimension $p$, parameters $b$ and $\gamma$
lead the shrinkage estimator with $\phi_2(w)$ to improve upon the James--Stein estimator. 
The three symbols, $\star$, $\bullet$ and $-$, have been already described.
For $C=1/(p-2)$,
$\beta$ in Proposition \ref{prop:2} is equal to $\gamma$ since
\begin{equation}
 \frac{\Phi_2(w)}{\int_0^w t^{-1}\Phi_2(t)\rd t }
=\frac{w^\gamma}{\int_0^w t^{\gamma-1}\rd t}=\gamma ,\quad\forall \ w\geq 0.
\end{equation}
Table \ref{tab:example.2} shows which pairs of the dimension $p$, parameters $b$ and $\gamma$
lead the shrinkage estimator with $\phi_2(w)$ to improve upon the James--Stein estimator. 
Unlike the case $C=1/(p-2)$ with $\Phi_1(w)$, $\beta+2$ in Proposition \ref{prop:2}
is equal to $\gamma+2$.
For example, when $b=1$ and $\gamma=5$, we have $\beta+2=\gamma+2=7$.
Then we numerically check that 
$\RD(\hat{\theta}_{\JSJS},\hat{\theta}_{\phi};p_*)\geq 0$ for $3\leq p\leq 7$
individually. 
On the other hand, $\RD(\hat{\theta}_{\JSJS},\hat{\theta}_{\phi};p_*)\geq 0$ for $p\geq 8$
follows from Proposition \ref{prop:2}.



\begin{table}[htbp]
  \centering
  \caption{$\Phi_1$ with $C=1, 1/(p-2)$}
  \label{tab:example.0}
  \begin{tabular}{cccccccccccccccc}
    \toprule
& \multicolumn{7}{l}{$C=1$} &\quad\quad &\multicolumn{7}{l}{$C=1/(p-2)$} \\
$p\backslash b$ & $\frac{1}{2}$ & $1$ & $\frac{3}{2}$ & $2$ & $\frac{5}{2}$ & $3$ & $\frac{7}{2}$ &\quad& $\frac{1}{2}$ & $1$ & $\frac{3}{2}$ & $2$ & $\frac{5}{2}$ & $3$ & $\frac{7}{2}$ \\[2pt]
 \cmidrule(lr){2-8}\cmidrule(lr){10-16}\\[-8pt]
3 & $\star$ & - & - & - & - & - & - & & $\star$  & - &  - &  - &  - &  - &  - \\
4 & $\bullet$ & $\star$ & $\star$ & - & - & - & - & & $\bullet$ & $\star$ &  - &  - &  - &  - &  - \\
5 & $\bullet$ & $\bullet$ & $\bullet$ & $\star$ & - & - & - & & $\bullet$ & $\bullet$ &  $\star$ &  $\star$ &  - &  - &  - \\
6 & $\bullet$ & $\bullet$ & $\bullet$ & $\bullet$ & $\star$ & - & - & & $\bullet$ & $\bullet$ &  $\bullet$ &  $\bullet$ &  - &  - &  - \\
7 & $\bullet$ & $\bullet$ & $\bullet$ & $\bullet$ & $\bullet$ & $\star$ & - & & $\bullet$ & $\bullet$ &  $\bullet$ &  $\bullet$ &  $\star$ &  - &  - \\
8 & $\bullet$ & $\bullet$ & $\bullet$ & $\bullet$ & $\bullet$ & $\bullet$ & - & & $\bullet$ & $\bullet$ &  $\bullet$ &  $\bullet$ &  $\bullet$ &  $\star$ &  - \\
9 & $\bullet$ & $\bullet$ & $\bullet$ & $\bullet$ & $\bullet$ & $\bullet$ & - & & $\bullet$ & $\bullet$ &  $\bullet$ &  $\bullet$ &  $\bullet$ &  $\bullet$ &  - \\
10 & $\bullet$ & $\bullet$ & $\bullet$ & $\bullet$ & $\bullet$ & $\bullet$ & - & & $\bullet$ & $\bullet$ &  $\bullet$ &  $\bullet$ &  $\bullet$ &  $\bullet$ &  - \\
 \bottomrule
  \end{tabular}
\end{table}

\begin{table}[htbp]
  \centering
  \caption{$\Phi_2$ with $C=1$}
  \label{tab:example.1}
  \begin{tabular}{cccccccccccccccccccccccccccccccccccc}
    \toprule
& \multicolumn{7}{l}{$b=1$} &\multicolumn{7}{l}{$b=3$} &\multicolumn{7}{l}{$b=5$} &\multicolumn{7}{l}{$b=7$} &\multicolumn{7}{l}{$b=9$} \\
$p\backslash\gamma$ & $\frac{1}{4}$ & $\frac{1}{2}$ & $1$ & $2$ & $3$ & $4$ & $5$ & $\frac{1}{4}$ & $\frac{1}{2}$ & $1$ & $2$ & $3$ & $4$ & $5$ & $\frac{1}{4}$ & $\frac{1}{2}$ & $1$ & $2$ & $3$ & $4$ & $5$ & $\frac{1}{4}$ & $\frac{1}{2}$ & $1$ & $2$ & $3$ & $4$ & $5$ & $\frac{1}{4}$ & $\frac{1}{2}$ & $1$ & $2$ & $3$ & $4$ & $5$ \\[2pt]
 \cmidrule(lr){2-8}\cmidrule(lr){9-15}\cmidrule(lr){16-22}\cmidrule(lr){23-29}\cmidrule(lr){30-36}\\[-8pt]
3& $\star$ &  $\star$ &  $\star$ &  $\star$ &  $\star$ &  $\star$ &  $\star$ &  $\star$ &  $\star$ &   - &   - &   - &   - &   - &   $\star$ &   - &   - &   - &   - &   - &   - &   $\star$ &   - &   - &   - &   - &   - &   - &   $\star$ &   - &   - &   - &   - &   - &   - \\
4 &  $\bullet$ &  $\bullet$ &  $\bullet$ &  $\bullet$ &  $\bullet$ &  $\bullet$ &  $\bullet$ &  $\bullet$ &  $\bullet$ &   $\star$ &   $\star$ &   $\star$ &   $\star$ &   $\star$ &   $\bullet$ &   $\star$ &   $\star$ &   - &   - &   - &   - &   $\bullet$ &   $\star$ &   - &   - &   - &   - &   - &   $\bullet$ &   $\star$ &   - &   - &   - &   - &   - \\
5 &  $\bullet$ &  $\bullet$ &  $\bullet$ &  $\bullet$ &  $\bullet$ &  $\bullet$ &  $\bullet$ &  $\bullet$ &  $\bullet$ &   $\bullet$ &   $\bullet$ &   $\bullet$ &   $\bullet$ &   $\bullet$ &   $\bullet$ &   $\bullet$ &   $\bullet$ &   $\star$ &   $\star$ &   $\star$ &   $\star$ &   $\bullet$ &   $\bullet$ &   $\star$ &   - &   - &   - &   - &   $\bullet$ &   $\bullet$ &   $\star$ &   - &   - &   - &   - \\
6 &  $\bullet$ &  $\bullet$ &  $\bullet$ &  $\bullet$ &  $\bullet$ &  $\bullet$ &  $\bullet$ &  $\bullet$ &  $\bullet$ &   $\bullet$ &   $\bullet$ &   $\bullet$ &   $\bullet$ &   $\bullet$ &   $\bullet$ &   $\bullet$ &   $\bullet$ &   $\bullet$ &   $\bullet$ &   $\bullet$ &   $\bullet$ &   $\bullet$ &   $\bullet$ &   $\bullet$ &   $\star$ &   $\star$ &   - &   - &   $\bullet$ &   $\bullet$ &   $\bullet$ &   - &   - &   - &   - \\
7 &  $\bullet$ &  $\bullet$ &  $\bullet$ &  $\bullet$ &  $\bullet$ &  $\bullet$ &  $\bullet$ &  $\bullet$ &  $\bullet$ &   $\bullet$ &   $\bullet$ &   $\bullet$ &   $\bullet$ &   $\bullet$ &   $\bullet$ &   $\bullet$ &   $\bullet$ &   $\bullet$ &   $\bullet$ &   $\bullet$ &   $\bullet$ &   $\bullet$ &   $\bullet$ &   $\bullet$ &   $\bullet$ &   $\bullet$ &   $\star$ &   $\star$ &   $\bullet$ &   $\bullet$ &   $\bullet$ &   $\star$ &   - &   - &   - \\
8 &  $\bullet$ &  $\bullet$ &  $\bullet$ &  $\bullet$ &  $\bullet$ &  $\bullet$ &  $\bullet$ &  $\bullet$ &  $\bullet$ &   $\bullet$ &   $\bullet$ &   $\bullet$ &   $\bullet$ &   $\bullet$ &   $\bullet$ &   $\bullet$ &   $\bullet$ &   $\bullet$ &   $\bullet$ &   $\bullet$ &   $\bullet$ &   $\bullet$ &   $\bullet$ &   $\bullet$ &   $\bullet$ &   $\bullet$ &   $\bullet$ &   $\bullet$ &   $\bullet$ &   $\bullet$ &   $\bullet$ &   $\bullet$ &   $\star$ &   $\star$ &   $\star$\\
9 &  $\bullet$ &  $\bullet$ &  $\bullet$ &  $\bullet$ &  $\bullet$ &  $\bullet$ &  $\bullet$ &  $\bullet$ &  $\bullet$ &   $\bullet$ &   $\bullet$ &   $\bullet$ &   $\bullet$ &   $\bullet$ &   $\bullet$ &   $\bullet$ &   $\bullet$ &   $\bullet$ &   $\bullet$ &   $\bullet$ &   $\bullet$ &   $\bullet$ &   $\bullet$ &   $\bullet$ &   $\bullet$ &   $\bullet$ &   $\bullet$ &   $\bullet$ &   $\bullet$ &   $\bullet$ &   $\bullet$ &   $\bullet$ &   $\bullet$ &   $\bullet$ &   $\bullet$\\
10 &  $\bullet$ &  $\bullet$ &  $\bullet$ &  $\bullet$ &  $\bullet$ &  $\bullet$ &  $\bullet$ &  $\bullet$ & $\bullet$ &  $\bullet$ &  $\bullet$ &  $\bullet$ &  $\bullet$ & $\bullet$ & $\bullet$ & $\bullet$ &  $\bullet$ & $\bullet$ & $\bullet$ &  $\bullet$ & $\bullet$ & $\bullet$ & $\bullet$  & $\bullet$ & $\bullet$ & $\bullet$ & $\bullet$ & $\bullet$ & $\bullet$ & $\bullet$ &  $\bullet$ & $\bullet$ & $\bullet$ & $\bullet$ & $\bullet$ \\
 \bottomrule
  \end{tabular}
\end{table}

\begin{table}[htbp]
  \centering
  \caption{$\Phi_2$ with $C=1/(p-2)$}
  \label{tab:example.2}
  \begin{tabular}{cccccccccccccccccccccccccccccccccccc}
    \toprule
& \multicolumn{7}{l}{$b=1$} &\multicolumn{7}{l}{$b=3$} &\multicolumn{7}{l}{$b=5$} &\multicolumn{7}{l}{$b=7$} &\multicolumn{7}{l}{$b=9$} \\
$p\backslash\gamma$ & $\frac{1}{4}$ & $\frac{1}{2}$ & $1$ & $2$ & $3$ & $4$ & $5$ & $\frac{1}{4}$ & $\frac{1}{2}$ & $1$ & $2$ & $3$ & $4$ & $5$ & $\frac{1}{4}$ & $\frac{1}{2}$ & $1$ & $2$ & $3$ & $4$ & $5$ & $\frac{1}{4}$ & $\frac{1}{2}$ & $1$ & $2$ & $3$ & $4$ & $5$ & $\frac{1}{4}$ & $\frac{1}{2}$ & $1$ & $2$ & $3$ & $4$ & $5$ \\[2pt]
 \cmidrule(lr){2-8}\cmidrule(lr){9-15}\cmidrule(lr){16-22}\cmidrule(lr){23-29}\cmidrule(lr){30-36}\\[-8pt]
3 & $\star$ & $\star$ & $\star$ & $\star$ & $\star$ & $\star$ & $\star$ & $\star$ & $\star$ &  - &  - &  - &  - &  - &  $\star$ &  - &  - &  - &  - &  - &  - &  $\star$ &  - &  - &  - &  - &  - &  - &  $\star$ &  - &  - &  - &  - &  - &  - \\
   4 & $\bullet$ & $\bullet$ & $\bullet$ & $\star$ & $\star$ & $\star$ & $\star$ & $\bullet$ & $\bullet$ &  $\star$ &  - &  - &  - &  - &  $\bullet$ &  $\star$ &  - &  - &  - &  - &  - &  $\bullet$ &  $\star$ &  - &  - &  - &  - &  - &  $\bullet$ &  $\star$ &  - &  - &  - &  - &  - \\
5 & $\bullet$ & $\bullet$ & $\bullet$ & $\bullet$ & $\star$ & $\star$ & $\star$ & $\bullet$ & $\bullet$ &  $\bullet$ &  $\bullet$ &  $\star$ &  $\star$ &  - &  $\bullet$ &  $\bullet$ &  $\star$ &  - &  - &  - &  - &  $\bullet$ &  $\bullet$ &  - &  - &  - &  - &  - &  $\bullet$ &  $\bullet$ &  - &  - &  - &  - &  - \\
6 & $\bullet$ & $\bullet$ & $\bullet$ & $\bullet$ & $\bullet$ & $\star$ & $\star$ & $\bullet$ & $\bullet$ &  $\bullet$ &  $\bullet$ &  $\bullet$ &  $\star$ &  $\star$ &  $\bullet$ &  $\bullet$ &  $\star$ &  - &  - &  - &  - &  $\bullet$ &  $\bullet$ &  $\star$ &  - &  - &  - &  - &  $\bullet$ &  $\bullet$ &  $\star$ &  - &  - &  - &  - \\
7 & $\bullet$ & $\bullet$ & $\bullet$ & $\bullet$ & $\bullet$ & $\bullet$ & $\star$ & $\bullet$ & $\bullet$ &  $\bullet$ &  $\bullet$ &  $\bullet$ &  $\bullet$ &  $\star$ &  $\bullet$ &  $\bullet$ &  $\bullet$ &  $\star$ &  $\star$ &  - &  - &  $\bullet$ &  $\bullet$ &  $\bullet$ &  - &  - &  - &  - &  $\bullet$ &  $\bullet$ &  $\bullet$ &  - &  - &  - &  - \\
8 & $\bullet$ & $\bullet$ & $\bullet$ & $\bullet$ & $\bullet$ & $\bullet$ & $\bullet$ & $\bullet$ & $\bullet$ &  $\bullet$ &  $\bullet$ &  $\bullet$ &  $\bullet$ &  $\bullet$ &  $\bullet$ &  $\bullet$ &  $\bullet$ &  $\bullet$ &  $\bullet$ &  $\star$ &  $\star$ &  $\bullet$ &  $\bullet$ &  $\bullet$ &  $\star$ &  - &  - &  - &  $\bullet$ &  $\bullet$ &  $\bullet$ &  - &  - &  - &  - \\
9 & $\bullet$ & $\bullet$ & $\bullet$ & $\bullet$ & $\bullet$ & $\bullet$ & $\bullet$ & $\bullet$ & $\bullet$ &  $\bullet$ &  $\bullet$ &  $\bullet$ &  $\bullet$ &  $\bullet$ &  $\bullet$ &  $\bullet$ &  $\bullet$ &  $\bullet$ &  $\bullet$ &  $\bullet$ &  $\bullet$ &  $\bullet$ &  $\bullet$ &  $\bullet$ &  $\bullet$ &  $\star$ &  - &  - &  $\bullet$ &  $\bullet$ &  $\bullet$ &  $\star$ &  - &  - &  - \\
10 & $\bullet$ & $\bullet$ & $\bullet$ & $\bullet$ & $\bullet$ & $\bullet$ & $\bullet$ & $\bullet$ & $\bullet$ &  $\bullet$ &  $\bullet$ &  $\bullet$ &  $\bullet$ &  $\bullet$ &  $\bullet$ &  $\bullet$ &  $\bullet$ &  $\bullet$ &  $\bullet$ &  $\bullet$ &  $\bullet$ &  $\bullet$ &  $\bullet$ &  $\bullet$ &  $\bullet$ &  $\bullet$ &  $\star$ &  $\star$ &  $\bullet$ &  $\bullet$ &  $\bullet$ &  $\bullet$ &  - &  - &  - \\
 \bottomrule
  \end{tabular}
\end{table}

Finally let
\begin{equation}\label{Phi.33}
 \Phi_3(w)=\frac{w}{a(p-2)},\quad\text{for }a>0,
\end{equation}
which depend on $p$. 
Then we have 
\begin{equation}
 \phi_3(w)=p-2- (p-2)\frac{a}{w+a} =(p-2)\frac{w}{w+a}
\end{equation}
and the corresponding estimator
\begin{equation}
\hat{\theta}_{a,p-2}= \left(1-\frac{p-2}{\|X\|^2+a}\right)X
\end{equation}
which is the member of \citeapos{Stein-1956} initial class \eqref{theta.a.b}.
In this case,
\begin{equation}
 I\Bigl(\frac{w}{a(p-2)},\frac{1}{p-2};p\Bigr)
=\frac{1}{a(p-2)}
\frac{\int_0^\infty w (w+b)^{-2} w^{p/2-2}e^{-w/2}\rd w}
{\int_0^\infty (w+b)^{-2} w^{p/2-2}e^{-w/2}\rd w}.
\end{equation}
Suppose $p\geq 7$. Then, it follows from the covariance inequality that
\begin{equation}
\begin{split}
& I\Bigl(\frac{w}{a(p-2)},\frac{1}{p-2};p\Bigr)=\frac{1}{a(p-2)}\frac{\int_0^\infty w \{w/(w+a)\}^2 w^{p/2-4}e^{-w/2}\rd w}
{\int_0^\infty \{w/(w+a)\}^2 w^{p/2-4}e^{-w/2}\rd w}\\
&\geq  
\frac{1}{a(p-2)}\frac{\int_0^\infty  w^{p/2-3}e^{-w/2}\rd w}
{\int_0^\infty w^{p/2-4}e^{-w/2}\rd w}
=\frac{1}{a(p-2)}\frac{2^{p/2-2}\Gamma(p/2-2)}{2^{p/2-3}\Gamma(p/2-3)}=\frac{p-6}{a(p-2)},
\end{split}
\end{equation}
which is increasing in $p$. Hence, for $0<a<4$ and $p\geq 2(12-a)/(4-a)$, we have
\begin{equation}
I\Bigl(\frac{w}{a(p-2)},\frac{1}{p-2};p\Bigr)\geq \frac{p-6}{a(p-2)} =\frac{1}{4}.
\end{equation}
Then from Proposition \ref{pro.p.p.p}
the corresponding estimator dominates the James--Stein estimator.

Nonetheless, it is not ruled out that it may outperform the James--Stein estimator 
in cases where $ p<(12-a)/(4-a)$. Let $a=1$ for example.
By numerical investigation, we can check
\begin{equation}
 I\Bigl(\frac{w}{p-2},\frac{1}{p-2};p\Bigr)\geq \frac{1}{4} 
\end{equation}
individually for $3\leq p\leq 7$ and hence we can conclude that the corresponding estimator 
\begin{equation}
 \left(1-\frac{p-2}{\|X\|^2+1}\right)X
\end{equation}
dominates the James--Stein estimator for $p\geq 3$.

\section{Asymptotic risk gain}
\label{rem:zenkin}
Here we are interested in asymptotic risk gain
$R(\theta,\hat{\theta}_{\JSJS})- R(\theta,\hat{\theta}_{\phi})$ as $\|\theta\|\to\infty$.
\cite{Maruyama-Takemura-2024} investigated the difference in risks 
between $\hat{\theta}_{\JSJS}$ and $\hat{\theta}_{\JSJS}^+$ as follows,
\begin{equation} 
\lim_{\|\theta\|\to\infty}
\frac{\|\theta\|^{(p+1)/2}e^{\|\theta\|^2/2}}{e^{\|\theta\|\sqrt{p-2}}} 
 \bigl\{R(\theta,\hat{\theta}_{\JSJS})-R(\theta,\hat{\theta}_{\JSJS}^+)\bigr\}
=4\frac{(p-2)^{(p-1)/4}}{\sqrt{2\pi}\exp(p/2-1)},
\end{equation}
which implies that the asymptotic risk improvement of the positive--part estimator 
is negligible.
In this section, we show that the shrinkage estimators induced by
$\Phi_1$ in \eqref{Phi.11}, $\Phi_2$ in \eqref{Phi.22} and $\Phi_3$ in \eqref{Phi.33}
achieve better asymptotic performance.

The functions
$\Phi_1$ in \eqref{Phi.11}, $\Phi_2$ in \eqref{Phi.22}, and $\Phi_3$ in \eqref{Phi.33},
along with the corresponding induced functions $\phi$ are all
regularly varying.
Recall \cite{Berger-1975} and \cite{Maruyama-Takemura-2008}
showed that
\begin{equation}\label{Berger-Maruyama-Takemura}
 E[G(\|X\|^2)]\approx G(\|\theta\|^2)\text{ for }\|\theta\|^2\to\infty,
\end{equation}
for some regularly varying $G(w)$, which can be applied for 
asymptotic evaluation of the difference in risks. 

For $\Phi_1$ in \eqref{Phi.11}, 
the corresponding difference in unbiased estimator of risks
satisfies
\begin{equation}\label{alpha.0}
\lim_{w\to\infty}
w\{\log w\}^{2} \{\hat{R}_{\JSJS}(w)-\hat{R}_\phi(w)\} =-b^2+4b,
\end{equation}
and hence, by \eqref{Berger-Maruyama-Takemura},
\begin{equation}\label{kyoku.1}
\lim_{\|\theta\|\to\infty} \|\theta\|^2\{\log \|\theta\|^2\}^{2}
\{R(\theta,\hat{\theta}_{\JSJS})- R(\theta,\hat{\theta}_{\phi})\}
=-b^2+4b.
\end{equation}
Hence $\hat{\theta}_{\phi}$ asymptotically dominates the James--Stein estimator if
\begin{equation}\label{alpha}
 0<b<4.
\end{equation}
The asymptotic risk gain is maximized at $b=2$ and
\begin{equation}\label{kyoku.1.5}
\lim_{\|\theta\|\to\infty} \|\theta\|^2\{\log \|\theta\|^2\}^{2}
\{R(\theta,\hat{\theta}_{\JSJS})- R(\theta,\hat{\theta}_{\phi})\}
=4.
\end{equation}
For $\Phi_2$ in \eqref{Phi.22}, the corresponding difference in unbiased estimator of risks
satisfies
\begin{equation}\label{alpha.1}
\lim_{w\to\infty}
w^{1+\gamma} \{\hat{R}_{\JSJS}(w)-\hat{R}_\phi(w)\} =4\gamma^2b^\gamma,
\end{equation}
and hence
\begin{equation}\label{kyoku.2}
\lim_{\|\theta\|\to\infty} \|\theta\|^{2(1+\gamma)}
\{R(\theta,\hat{\theta}_{\JSJS})- R(\theta,\hat{\theta}_{\phi})\}
=4\gamma^2 b^\gamma.
\end{equation}
which implies that 
$\hat{\theta}_{\phi}$ asymptotically dominates the James--Stein estimator for $\gamma>0$
and $b>0$.
The function $\Phi_3$ in \eqref{Phi.33} corresponds to $ \Phi_2$ with $\gamma=1$ and
$b=a(p-2)$. Then, for $\Phi_3$, we have 
\begin{equation}\label{kyoku.3}
\lim_{\|\theta\|\to\infty} \|\theta\|^{4}
\{R(\theta,\hat{\theta}_{\JSJS})- R(\theta,\hat{\theta}_{\phi})\}=4a(p-2).
\end{equation}
From \eqref{kyoku.1}, \eqref{kyoku.1.5}, \eqref{kyoku.2}, and \eqref{kyoku.3}, 
the optimal asymptotic risk gain is attained by $\hat{\theta}_{\phi 1}$
with 
\begin{equation}\label{phi.log.00}
 \phi_1(w)=p-2- \frac{2}{\log(w+1)+2C},
\end{equation}
among the class of $\phi(w)$ induced by $\Phi_1$, $\Phi_2$ and $\Phi_3$.
Furthermore, $\phi_1(w)$ given by \eqref{phi.log.00}
remains optimal even within a broader class of functions
\begin{equation}
 \left\{\phi(w)\mid \phi(w)\approx p-2-\frac{\alpha}{(\log w)^\beta w^\gamma}\text{ as }w\to\infty\right\}.
\end{equation}

\section{Concluding Remarks}
\label{sec:CR}
This paper introduces a new framework for constructing shrinkage estimators that dominate the James--Stein estimator under quadratic loss. 
By leveraging a monotonicity condition on a transformed shrinkage function, 
we derive a general class of estimators that satisfy dominance 
over the James--Stein estimator. 
The approach allows for polynomial or logarithmic convergence to the optimal shrinkage factor.
Furthermore, the paper provides sufficient conditions for uniform dominance across dimensions, enabling practical application without dimension--specific verification. 
Several examples, including estimators with improved asymptotic risk properties, are presented to illustrate the theory. 

We believe that the results of this paper contribute to a deeper understanding 
of shrinkage estimation.
On the other hand, all estimators proposed in this paper, including
\begin{equation}
\left(1-\frac{p-2}{\|X\|^2+1}\right)X,\quad
\left(1-\frac{1}{\|X\|^2}\left\{p-2-\frac{2}{\log (\|X\|^2+1)+2/(p-2)}\right\} \right)
X
\end{equation}
are inadmissible since their corresponding positive--part estimators dominate them.
Consequently, exploring admissible generalized Bayes estimators that fulfill our sufficient conditions represents a promising direction for future research.

\end{document}